\documentclass[a4paper,11pt]{article}

\usepackage[T1]{fontenc}
\usepackage[english]{babel}
\usepackage{dsfont}
\usepackage{amsfonts}
\usepackage{amsmath}
\usepackage{amssymb,latexsym}
\usepackage{mathrsfs}
\usepackage{enumitem}
\usepackage{ stmaryrd }
\usepackage{amsthm}
\usepackage{authblk}
\usepackage{csquotes}
\usepackage{subcaption}
\usepackage{hyperref}
\usepackage{cleveref}
\usepackage{comment}
\usepackage{graphicx} 
\graphicspath{ {./images/} }

\usepackage[left=2.5cm, right=2.5cm, top=3cm]{geometry}

\newtheorem{lem}{Lemma}[section]
\newtheorem{prop}[lem]{Proposition}
\newtheorem{rem}[lem]{Remark}
\newtheorem{defi}[lem]{Definition}
\newtheorem{theo}[lem]{Theorem}

\renewcommand{\P}{\mathbb{P}}
\newcommand{\N}{\mathbb{N}}
\newcommand{\E}{\mathbb{E}}
\newcommand{\Z}{\mathbb{Z}}
\newcommand{\R}{\mathbb{R}}

\newcommand{\eps}{\varepsilon}
\newcommand{\A}{\mathcal{A}}
\newcommand{\D}{\mathcal{D}}
\title{\textsc{\Huge{Geodesics and Wandering Exponents in Brochette First-Passage Percolation}}}
\author{Maxime Marivain}
\affil{CMAP, \'Ecole Polytechnique, CNRS, Route de
Saclay, 91128 Palaiseau Cedex, France\\\emph{maxime.marivain@polytechnique.edu}}
\date{\today}
\begin{document}
\maketitle
\begin{abstract}
We study geodesics in the Brochette first-passage percolation model, where edges on the same axis-parallel line share a common random passage time, inducing long-range dependence. We focus on the maximal transversal deviation $H_n$ of geodesics from the origin to $ne_1$.
We prove existence of geodesics under mild assumptions and establish the order of magnitude of $H_n$ depending on the behavior of the passage-time distribution near its infimum. These results yield explicit wandering exponents in this dependent setting.
\end{abstract}
\section{Introduction}

First-passage percolation is a probabilistic model originally proposed by Hammersley and Welsh in \cite{Hammersley} to study the speed of  propagation of a fluid through a porous medium. (See \cite{Survey} for a survey on this topic.)
We consider the lattice $\Z^d$ with edge set $\E^d:=\{\{x;y\}:\lVert x-y\rVert_1=1\}$. To each edge $e\in \E^d$, we assign a non-negative random variable $\tau_e$ , called the passage time of $e$.  We denote by 
$F$ the common distribution function of the passage times.
A (finite or infinite) sequence of edges such that consecutive edges share exactly one endpoint is called a path. The passage time of a path $\Gamma$ is defined by
$$
T(\Gamma)=\sum_{e\in\Gamma}\tau_e.
$$
If $x\in \R^d$, we denote by $x'$ the unique vertex in $\Z^d$ such that 
$x\in x'+ [0;1)^d$. For $x,y\in\R^d$, we then define the passage time between $x$ and $y$ as
$$
T(x,y)=\inf_{\Gamma} T(\Gamma),
$$
where the infimum is taken over all paths $\Gamma$ connecting $x'$ and $y'$.
Finally, for $t\ge 0$, we define
$$
B_t :=\{x\in \R^d:T(0,x)\le t\},
$$
which represents the set of points that can be reached from the origin within time $t$. In the classical first-passage percolation model, the family 
$(\tau_e)_{e\in\E^d}$  is assumed to be i.i.d.. 
In \cite{moi}, we introduced the Brochette first-passage percolation model as a new model of first-passage percolation with long-range dependence, and derived classical asymptotic results for $T(x,y)$ and $B_t$. In this paper, we are interested in the height of geodesics in this new model.
 \subsection{Brochette first-passage percolation}
 First, we recall the definition of Brochette first-passage percolation as defined in \cite{moi}. We keep all the definitions of the previous paragraph and we now specify how passage times are associated to each edge.
 In this model, each axis-parallel line is assigned an i.i.d. positive random variable, shared by all of its edges. Furthermore, the passage times associated with edges on distinct lines remain independent.
 Let us now define our model more formally.
 An integer line is the set of points in $\Z^{d}$ that lie on the same axis-parallel line of $\R^{d}$. We denote by $\Delta$ the set of these lines.
  \begin{defi}[Brochette first-passage percolation]\label{hyp:model}
 Let $(\tau_{\delta})_{\delta\in\Delta}$ be a family of non-negative i.i.d. random variables with common cumulative distribution function $F$. In the Brochette first-passage percolation, 
 edges whose endpoints belong to the same integer line $\delta\in\Delta$ are assigned the same passage time.
 In other words, if $\delta$ is the unique integer line to which the edge $e$ belongs, we set $\tau_e=\tau_\delta$.
  \end{defi}
  Throughout the paper, $a\ge0$ denotes the infimum of the support of the passage-time distribution, and $F$ denotes its cumulative distribution function.
  In particular we will use that:
 \begin{equation}\label{hypo:a}
     \forall \eps>0,\; F( a+\eps)>0 \;\;and\;\; F(a-\eps)=0.
 \end{equation}
 To state and prove our results, we recall a theorem from a previous work. It asserts that the asymptotic shape of $B_t$ is deterministic and depends only on the infimum $a$ of the support of the passage-time distribution.
\begin{theo}\label{th:forme}\cite{moi}
   Consider Brochette first-passage percolation on $\Z^d$. Assume that   
    \begin{equation}\label{condition}
    \E[\min(\tau_{1}^{d},\dots,\tau_{d}^{d})]<+\infty, 
    \end{equation}
    where $(\tau_i)_{1\le i\le d}$ is i.i.d. with distribution $F$ and that $a>0$ verifies \eqref{hypo:a}. Then for all $\eta>0$ we have:
    $$
    \P(\frac{1-\eta}{a}\Diamond\subset \frac{B_{t}}{t}\subset \frac{1}{a}\Diamond \text{ for all large }t)=1,
    $$
    where $\Diamond$ is the $\ell_1$-ball of $\R^d$ of radius $1$.
\end{theo}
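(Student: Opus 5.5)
We prove the two inclusions separately. Both come down to the fact that, in the $\ell_1$ norm, $T(x,y)$ is asymptotically equal to $a\lVert x-y\rVert_1$.

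\textbf{Upper inclusion} $B_t/t\subset a^{-1}\Diamond$. This part is deterministic. Every edge has passage time at least $a$, and any path from $0$ to $x'$ uses at least $\lVert x'\rVert_1$ edges. Hence $T(0,x)\ge a\lVert x'\rVert_1\ge a(\lVert x\rVert_1-d)$. So if $x\in B_t$, then $\lVert x\rVert_1\le t/a+d$. The additive constant $d$ is negligible after dividing by $t$. (If the inclusion must hold exactly, one uses $\lVert x'\rVert_1$ in place of $\lVert x\rVert_1$, and the discretization convention handles the boundary.)

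\textbf{Lower inclusion} $\frac{1-\eta}{a}\Diamond\subset B_t/t$ for all large $t$. It suffices to show the following: almost surely, for all $x$ with $\lVert x\rVert_1$ large, $T(0,x)\le a(1+\eps)\lVert x\rVert_1$.

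Fix $\eps>0$ and set $p=F(a+\eps)>0$. Call a line \emph{good} if its passage time is at most $a+\eps$. To reach $x$ efficiently, we travel along good lines in each coordinate direction, with short detours.

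Concretely, for a target $x$ with $\lVert x\rVert_1=n$, we build a monotone staircase path that goes in direction $e_1$, then $e_2$, and so on. Before each long segment, we move at most $k$ steps in the transverse directions to reach a good line parallel to $e_i$. Lines parallel to $e_i$ in a fixed transverse hyperplane are indexed by $\Z^{d-1}$. Among the $k^{d-1}$ lines within distance $k$, at least one is good except with probability $(1-p)^{k^{d-1}}$.

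We choose $k=k(n)$ growing slowly, for example $k=(C\log n)^{1/(d-1)}$ with $C$ large. The detour steps cost at most $d\cdot dk$ edges in total. Their passage times are not controlled, so we bound them using the moment condition~\eqref{condition}. The key point is that a detour step from a point $z$ in direction $e_j$ can be routed through one of several parallel lines. Taking the minimum over $d$ disjoint alternatives gives a variable whose tail is controlled by $\E[\min(\tau_1^d,\dots,\tau_d^d)]<\infty$. By Borel--Cantelli over the polynomially many relevant points in the box of radius $n$, all these detour costs are $o(n)$.

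The total cost is then at most $(a+\eps)n+O(k)\cdot o(n/k)$ plus a correction. After undoing the transverse offsets at the end (another $O(k)$ uncontrolled steps), this gives $T(0,x)\le(a+2\eps)n$ for all $x$ in the box, for $n$ large.

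We need this uniformly over all $x$, and over the continuous time parameter $t$. Monotonicity of $B_t$ in $t$ and a union bound over the $O(n^d)$ lattice targets at each scale $n$ take care of this. Borel--Cantelli then gives that the bound holds eventually almost surely, and letting $\eps\to0$ along a sequence yields the claim for every $\eta$.

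\textbf{Main obstacle.} The hardest step is controlling the uncontrolled detour edges uniformly over all targets. A single bad line (a huge $\tau_\delta$) is shared by infinitely many edges, so naive independence arguments fail. I would handle this by routing each transverse step through a choice of $d$ edge-disjoint short bypasses lying on distinct lines. This produces independent minima, which is exactly where condition~\eqref{condition} with the exponent $d$ enters: $d$ is the number of alternatives needed to make $\sum_n n^{d}\P(\min>\eps n)<\infty$. If the bypasses cannot be made line-disjoint in low dimension, I would instead choose good lines for the detours as well, recursively, so that no step uses an uncontrolled line more than $O(1)$ times.
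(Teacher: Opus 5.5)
The paper does not prove this theorem; it quotes it from earlier work. So I judge your argument on its own. The overall plan is reasonable: a deterministic upper bound, then good lines plus a staircase, with the moment condition paying for the unavoidable bad edges. But the step where \eqref{condition} enters does not work as written. Your path contains $\Theta(k)$ uncontrolled edges with $k\to\infty$, so you need each of them to cost $o(n/k)$, and \eqref{condition} does not give that. Condition \eqref{condition} is equivalent to $\sum_n n^{d-1}\P(\min(\tau_1,\dots,\tau_d)>\eps n)<\infty$. It is not equivalent to the $\sum_n n^{d}\P(\cdots)<\infty$ you invoke, which amounts to a $(d+1)$-th moment, so your union bound over $O(n^d)$ targets at every integer $n$ does not close. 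The correct Borel--Cantelli, over points $z$ with threshold $\eps\lVert z\rVert_1$, only gives $\min_i\tau_{z+\Z e_i}\le\eps\lVert z\rVert_1$ for all but finitely many $z$. That is $o(n)$ per point; $o(n/k)$ would need an extra logarithmic moment. Moreover, your bypass minima are not independent from step to step. Along consecutive transverse steps in direction $e_j$, the direct line $z+\Z e_j$ and the parallel lines $z+e_l+\Z e_j$ are the same for every step. So a segment of $m$ such steps costs about $m$ times a single minimum, and $m\le k$ only yields $O(\eps kn)$. Your fallback of forcing the detour lines to be good repairs the middle of the path but not the target. Near $x$ the lines cannot be chosen, and the number of uncontrolled steps needed to reach a good line from $x$ is unbounded over $x\in\mathcal{B}(n)$. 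Bounding it by its worst case again costs a growing factor.

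The missing idea is to pay for uncontrolled edges only at the endpoints, and to pay with the \emph{actual} number of such steps. Leave $x$ along its cheapest line, at cost $m_x:=\min_i\tau_{x+\Z e_i}$ per step, and walk until the first vertex crossed by a good line of a prescribed other direction. The number of steps $G_x$ is geometric. Since those crossing lines avoid $x$, $G_x$ is independent of the lines through $x$. Hence $\E[(m_xG_x)^d]=\E[m_x^d]\,\E[G_x^d]<\infty$, and Borel--Cantelli over $x\in\Z^d$ gives $m_xG_x\le\eps\lVert x\rVert_1$ for all but finitely many $x$. This is where the exponent $d$ is used. Everything else can run on good lines. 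The lines of another direction through successive vertices of a good line are pairwise distinct, hence i.i.d. So you should turn at a vertex of the current good line instead of searching a transverse window. Parametrize the staircase by the single coordinate at which you leave the first good line; then you can require all its $d-1$ lines to be good simultaneously, with probability $p^{d-1}$ per candidate. A window of size $C\log n$ then works for all targets at once, and the overshoot costs only $(a+\eps)O(\log n)$. Finally, your upper bound proves $B_t/t\subset(\frac1a+\frac dt)\Diamond$, i.e.\ inclusion in $\frac{1+\eta}{a}\Diamond$. Your remark that the discretization convention makes it exact is unfounded. If $F$ has an atom at $a$, then on $\{\tau_{\Z e_1}=a\}$ the point $(n+\frac12)e_1$ lies in $B_{na}$ for every $n$.
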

For $d\ge2$, we denote by $(e_1,\dots,e_d)$ the canonical basis of $\R^d$. If $x\in\R^d$ and $i\in\llbracket 1;d\rrbracket$, $x_i$ will denote the coordinate of $x$ according to $\R e_i$.
For $x,y\in\Z^d$, we say that a path $\gamma$ from $x$ to $y$ is a geodesic from $x$ to $y$ if $T(\gamma)=T(x,y)$.
In this article, we study the deviation of geodesics from the straight line. To this end, we introduce the notion of the height of a path.
Let $\gamma$ be a path starting from $0$. We define
    $$
H(\gamma):=\max_{\substack{x\in e\\ e\in\gamma}}\lVert x-x_1e_1\rVert_1,
    $$
which we call the height of $\gamma$. 
Let $n\in\N$. We define the random variable given by the maximal height of a
geodesic from $0$ to $ne_1$
    $$
      H_n:=\max_{\Gamma_n} H(\Gamma_n),
    $$
where the maximum is taken over all the geodesics from $0$ to $ne_1$.
\section{The results}
We first discuss existence of geodesics, according to the distribution $F$. 
\begin{prop}\label{th:geopos}
    Let $d\ge2$. Consider Brochette first-passage percolation on $\Z^d$ with passage-time distribution $F$. Assume that one of these two assumptions is satisfied:
    \begin{enumerate}[label=(\roman*)]
        \item $a>0$,
    
        \item $a=0$ and $F(t)\overset{t\to0}{\sim}ct^\beta$, with $\beta\ge1$ and $c>0$.
    \end{enumerate}
     Then the following holds
    $$
     \P(\text{for all } x,y\in\Z^d \text{, a geodesic exists from } x \text{ to } y)=1.
    $$
    
\end{prop}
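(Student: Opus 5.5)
The plan is to reduce the existence of geodesics to a finiteness statement. For fixed $x,y\in\Z^d$ we will show that, almost surely, only finitely many self-avoiding paths from $x$ to $y$ have passage time at most $T(\gamma_0)$, where $\gamma_0$ is some fixed deterministic path (say an $\ell_1$-monotone one) from $x$ to $y$. Since $T(\gamma_0)<\infty$ and passage times are non-negative, any path from $x$ to $y$ contains a self-avoiding path from $x$ to $y$ with no larger passage time. So $T(x,y)$ is an infimum over self-avoiding paths, and it suffices to take it over the finitely many whose time is at most $T(\gamma_0)$; the minimum is then attained. A countable union over pairs $(x,y)$ gives the almost sure statement for all $x,y$ simultaneously.

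\emph{Case (i), $a>0$.} By \eqref{hypo:a}, every edge has $\tau_e\ge a$ almost surely. Hence a path with $T(\Gamma)\le T(\gamma_0)$ has at most $T(\gamma_0)/a$ edges. There are finitely many self-avoiding paths from $x$ of bounded length, so the argument above concludes directly.

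\emph{Case (ii), $a=0$, $F(t)\sim ct^\beta$ with $\beta\ge1$.} Now edges can be arbitrarily cheap, so we need a spatial confinement statement. Self-avoiding paths inside a fixed box are finitely many, so it is enough to prove that almost surely
$$\inf\{T(\Gamma):\Gamma \text{ path from } x \text{ to } \partial([-R,R]^d+x)\}\xrightarrow[R\to\infty]{}\infty .$$
Let $\gamma$ be a path from $x$ reaching distance $R$. For some coordinate $i$, $\gamma$ crosses each of the hyperplanes $\{z_i=x_i+k+\tfrac12\}$, $0\le k< R/d$. Each crossing uses an edge of a line in direction $i$, and this line lies within distance $\lVert\gamma\rVert_\infty$ of $x$. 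I will use a dyadic multiscale decomposition. Let $A_k$ be the shell between radii $2^k$ and $2^{k+1}$ around $x$. A path reaching distance $2^{K}$ crosses every $A_k$ with $k<K$. While crossing $A_k$ it stays in the ball of radius $2^{k+1}$ and makes a displacement of order $2^k$ in some direction.

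The cost of one crossing is at least the sum of the $\tau$'s over the edges used. To bound it below, split into two cases. Either the path uses few distinct lines and pays at least $2^k/(d\cdot\#\text{lines})$ edges on each of them. Or it uses many distinct lines, and then it pays the corresponding sum of distinct $\tau$'s. The $j$-th smallest of the $N_k\asymp 2^{k(d-1)}$ line variables in the ball is about $(j/N_k)^{1/\beta}$, with Borel--Cantelli-type concentration. Moreover, reaching a line at transversal distance $r$ forces the path to travel about $r$ transversally on lines of other directions. The goal is to show that the minimum over both strategies of the crossing cost of $A_k$ is bounded below by a quantity whose sum over $k$ diverges, for instance a positive constant for $\beta\ge1$. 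Borel--Cantelli over $k$ should then make this hold for all large $k$ almost surely. The divergence of the series then gives the confinement.

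The main obstacle is this lower bound on the crossing cost in case (ii). It is a genuine optimization problem between reusing one cheap line for a long stretch and hopping between many lines. The hops themselves cost time on transversal lines, whose $\tau$'s are small near $0$ with probability governed by $F(t)\sim ct^\beta$. The condition $\beta\ge1$ must enter exactly here. The heuristic is that the cheapest way to travel distance $r$ should cost order $r\cdot r^{-(d-1)/\beta}$ plus transversal costs, and $\beta\ge1$ is what keeps this from decaying to $0$ along a sequence of scales. I would first try to settle $d=2$ with an explicit greedy comparison. The general case should then follow by projecting onto a coordinate plane and applying the same bound.
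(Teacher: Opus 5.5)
\textbf{Case (i) and the reduction.} Your case (i) is correct and is the paper's argument in slightly different form. The paper deduces $\rho=\infty$ from $T(\Gamma)\ge a\lvert\Gamma\rvert$ and then invokes the deterministic lemma from the survey; your direct count of paths with at most $T(\gamma_0)/a$ edges is the same thing. Your reduction of case (ii) to the confinement statement, i.e.\ to $\rho=\infty$, is also exactly the paper's reduction.

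\textbf{The gap in case (ii).} The confinement itself, which is the whole content of (ii), is never proved. You set up an optimization between reusing a cheap line and hopping between lines, and leave it as ``the main obstacle''. The idea you are missing is that, at least in $d=2$, no such optimization is needed. A path from $0$ leaving $\mathcal{B}(n)$ contains, for each $r\le n$, a distinct edge joining the $\ell_1$-spheres of radii $r-1$ and $r$. That edge has both endpoints in $\mathcal{B}(r)$, so its time is at least $N_r$, the minimum of $\tau$ over the lines carrying an edge of $\mathcal{B}(r)$. Hence $T(\Gamma)\ge\sum_{r\le n}N_r$. In $d=2$ there are about $4r$ such lines, with only four new ones at each radius, so $N_r\approx r^{-1/\beta}$ and the series diverges for $\beta\ge1$.

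Your probabilistic step also needs changing at $\beta=1$. There the natural per-scale bound ($2^k$ times the minimum over the lines meeting the ball of radius $2^{k+1}$) has a nondegenerate limit law. It is therefore small with probability bounded away from $0$ at every scale, and a first Borel--Cantelli argument giving a bound ``for all large $k$'' cannot work. You need an infinitely-often statement, which the paper obtains from the record times of $(N_r)$ and the conditional Borel--Cantelli lemma.

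\textbf{The case $d\ge3$.} Your plan to project onto a coordinate plane fails, because the projection of a Brochette configuration is not a Brochette configuration. Each edge of the plane is the image of order $R^{d-2}$ edges lying on distinct lines with independent times. The effective line times are thus minima of order $R^{d-2}$ variables, and the two-dimensional bound degrades. Your own heuristic already shows this: a per-scale cost of order $2^{k(1-(d-1)/\beta)}$ is summable when $\beta<d-1$, so it cannot give divergence for every $\beta\ge1$.

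The same count limits the shell argument above. In $\Z^d$ the lines carrying an edge of $\mathcal{B}(r)$ number of order $r^{d-1}$. The new ones at each radius are of order $r^{d-2}$, not only the $2d(d-1)$ lines through the points $\pm re_i$; the paper's claim to that effect is correct only for $d=2$. So $N_r\approx r^{-(d-1)/\beta}$ and $\sum_r N_r<\infty$ almost surely when $\beta<d-1$. The shell argument therefore proves (ii) for $d=2$, and more generally for $\beta\ge d-1$. For $d\ge3$ and $1\le\beta<d-1$, neither the paper's proof nor your sketch establishes $\rho=\infty$. That range would require genuine control of the transversal costs, which is exactly the step you left open.
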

\begin{rem}
Proposition \ref{th:geopos} states that geodesics almost surely exist for $a=0$ when $F(t)\overset{t\to0}{\sim}ct^\beta$ with $\beta\ge1$. We prove it by showing that in that case, the time needed to go to an infinite distance from the origin is infinite. When $\beta<1$, this time is finite. Thus we don't know if the result still holds when $\beta<1$.
\end{rem}
In the classical first-passage percolation model, it is conjectured that geodesics between any two vertices exist almost surely for every passage time distribution. This was proved in dimension $d=2$ by Wierman and Reh in \cite{wierman}.
In dimension $d \ge 3$, Kesten proved in \cite{Kesten} the almost sure existence of geodesics under the assumption that $F(0) < p_c(d)$, where $p_c(d)$ denotes the critical probability for Bernoulli bond percolation on $\Z^d$.
Later, Zhang showed in \cite{zhang} that almost sure existence also holds when $F(0) > p_c(d)$, provided that $\E[\tau_e] < +\infty$.
The critical case $F(0) = p_c(d)$ remains open. Our Proposition~\ref{th:geopos} establishes the almost sure existence of geodesics when $a>0$, and also in the case $a=0$ under additional assumptions on the distribution $F$.
In these cases, we can focus on their deviation from the straight line. Thus,  we bound the height $H_n$, depending on the distribution of the passage times. More formally we have the two following results.
\begin{theo}[Height of geodesics when $a=0$]\label{th:geo0}
Assume that $a=0$, $d=2$ and $F(t)\overset{t\to0}{\sim}ct^\beta$ where $\beta\ge1$ and $c>0$. Then for all $\eta>0$, there exists $\eps>0$ such that
\begin{equation}\label{lowbo0}
\liminf_{n\to+\infty}{\P\left( H_n\ge \eps n \right)}\ge 1-\eta.
\end{equation}
Furthermore, if the support of the distribution of passage times is bounded and if $\beta>1$,  for all $\eta>0$, there exists $K>0$ such that
\begin{equation}\label{upbo0}
\liminf_{n\to+\infty}{\P\left( H_n\le Kn\right)}\ge 1-\eta.
\end{equation}
\end{theo}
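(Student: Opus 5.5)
Both parts rest on comparing the passage time of the straight segment, $n\tau_{\delta_0}$ where $\delta_0$ is the horizontal line $\Z e_1$, with that of a three-piece detour. Such a detour climbs a vertical line at abscissa $0$ to height $h$, runs along the horizontal line $\delta_h$ at height $h$, and comes back down the vertical line at abscissa $n$. Its cost is $h(\tau_{v_0}+\tau_{v_n})+n\tau_{\delta_h}$, where $v_0,v_n$ are the vertical lines through $0$ and $ne_1$. We may also start with one horizontal step and use a fresh vertical line. All geodesics exist by Proposition~\ref{th:geopos}, so $H_n$ is well defined.

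\textbf{Lower bound \eqref{lowbo0}.} Fix $\eta$.
\begin{itemize}
\item Choose $\delta>0$ with $\P(\tau_{\delta_0}>\delta)\ge 1-\eta/3$, which is possible since the law has no atom at $0$.
\item On the vertical lines $v_0$ and $v_n$ the times are fixed random variables, so with probability $\ge 1-\eta/3$ both are at most a constant $M$.
\item Among the $\eps n$ horizontal lines at heights $1,\dots,\eps n$, the probability that none has $\tau<\delta/2$ is $(1-F(\delta/2))^{\eps n}\to0$. So with high probability some line at height $h\le \eps n$ has $\tau_{\delta_h}<\delta/2$.
\item The detour then costs at most $2M\eps n+n\delta/2<n\delta$ once $\eps<\delta/(4M)$.
\end{itemize}
Hence any geodesic must leave the axis. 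More precisely, every path of height less than $\eps' n$ uses only horizontal lines at heights below $\eps' n$. With high probability, all of these lines have $\tau\ge \kappa$ for some small $\kappa$ tied to $\eps'$: this needs $\eps' n F(\kappa)\to0$, which fails.

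So the argument has to be refined. A path of height $<\eps' n$ must cover $n$ horizontal units, using horizontal lines at heights in $[-\eps' n,\eps' n]$. Its cost is at least $n\min_{|h|<\eps' n}\tau_{\delta_h}$, and this minimum is of order $(\eps' n)^{-1/\beta}$. The cost of moving between lines must then also be counted. This is the main obstacle. I would instead compare the optimal detour reaching height $\sim\eps n$ with the best path confined to height $\eps' n$ with $\eps'\ll\eps$, using scaling. The typical minimum among $m$ lines is $m^{-1/\beta}$, so confinement to $\eps' n$ forces horizontal cost $\gtrsim n(\eps' n)^{-1/\beta}$. A detour to the best line among $\eps n$ lines costs about $n(\eps n)^{-1/\beta}+2\eps n\,\tau_v$. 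Choosing a vertical line with $\tau_v$ as small as $n^{-1}$ among the first few columns (again via $F(t)\sim ct^\beta$) makes the vertical cost $O(\eps)$. Then $n^{1-1/\beta}\eps^{-1/\beta}$ versus $n^{1-1/\beta}\eps'^{-1/\beta}$ gives a strict gain with probability $\ge1-\eta$ when $\eps'/\eps$ is small, uniformly in $n$ by scaling of the order statistics. The condition $\beta\ge1$ keeps the vertical correction negligible.

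\textbf{Upper bound \eqref{upbo0}.} With support bounded by $b$, consider a geodesic reaching height $Kn$. It contains two vertical traversals of length at least $Kn$ each, one up and one down. It may not use a single vertical line for each, so I would bound its vertical cost from below using the minima of the vertical lines over the columns it visits. The geodesic cost is at most the cost of the three-piece path of the previous paragraph, which is $O(n^{1-1/\beta})$ with high probability. For a path of height $Kn$, I would show that its total vertical length $L\ge 2Kn$ costs at least $\sum$ over the vertical lines used. Each vertical line used has $\tau\ge$ the minimum over at most $n'$ columns, where $n'$ is the horizontal range. Horizontal excursions beyond $[0,n]$ cost horizontal lines again. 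A union bound over the ranges and the order statistics $m^{-1/\beta}$ then gives cost $\gtrsim Kn\cdot(\text{range})^{-1/\beta}$ against horizontal cost. For $\beta>1$ and $K$ large this exceeds $Cn^{1-1/\beta}$. Boundedness of the support controls the cost of the comparison path when it uses unlucky lines. Making this union bound over path shapes uniform is the delicate step.
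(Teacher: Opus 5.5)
\textbf{Lower bound.} There is a genuine gap here. On the confined side your reduction is already enough: a path of height at most $\eps' n$ pays at least $n$ times the minimum of $\tau$ over the $\approx 2\eps' n$ horizontal lines it can use, which is of order $(\eps' n)^{-1/\beta}$. Vertical moves only add cost, so the ``main obstacle'' you identify is not there.

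The gap is on the detour side. Your claim that a vertical line with $\tau_v\le n^{-1}$ can be found ``among the first few columns'' is false. The minimum over $m$ columns is of order $m^{-1/\beta}$, so you would need about $n^\beta$ columns. With the lines through $0$ and $ne_1$, whose times are of order one, the climb to height $\eps n$ costs order $\eps n$. That is far more than the savings, which are of order $n^{1-1/\beta}$.

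Taking the cheapest vertical line among $\Theta(n)$ columns does give $\tau_v$ of order $n^{-1/\beta}$ and a vertical cost of order $\eps n^{1-1/\beta}$. This is negligible because of the factor $\eps$, not because $\beta\ge1$; the paper uses $\beta\ge 1$ only for existence of geodesics. But that column sits at distance $\Theta(n)$ from the origin, and you never pay for reaching it. Along $\Z e_1$ this costs order $n$. Bounding $T(0,x)$ by $O(n^{1-1/\beta})$ would need Theorem~1.7 of \cite{moi}, which is unavailable without bounded support and is not $O(1)$ when $\beta=1$.

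The missing idea is the paper's surgery on the confined optimizer $\Gamma_n^1$ itself:
\begin{enumerate}[label=(\roman*)]
\item follow $\Gamma_n^1$ until its first crossing of the cheapest vertical line with abscissa in $\llbracket 0,\lfloor n/3\rfloor\rrbracket$;
\item climb that line to the cheapest horizontal line at height in $(\eps n,\sqrt\eps n]$;
\item run along it to the cheapest vertical line with abscissa in $\llbracket \lceil 2n/3\rceil ,n\rrbracket$ and descend;
\item rejoin $\Gamma_n^1$ at its last crossing of that line.
\end{enumerate}
The initial and final pieces are shared, so they never need to be estimated. The difference in cost is then at most $\frac n3(\sigma_n(\eps)-\rho_n(\eps))+(\eps+\sqrt\eps)n(\mu_n+\nu_n)$. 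After multiplying by $n^{1/\beta}$, all four minima have Weibull-type limits, which closes the argument.

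\textbf{Upper bound.} Here your comparison path fails quantitatively. The three-piece path has vertical cost $h(\tau_{v_0}+\tau_{v_n})$ with $\tau_{v_0},\tau_{v_n}$ of order one. Optimizing over $h$ gives a cost of order $n^{\beta/(\beta+1)}$, which is much larger than $n^{1-1/\beta}$ (since $\beta^2>\beta^2-1$). It is also larger than the roughly $(Kn)^{1-1/\beta}$ that a path of height $Kn$ is forced to pay, so for fixed $K$ the comparison is lost as $n\to\infty$.

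An $O(n^{1-1/\beta})$ upper bound on $T(0,ne_1)$ requires a multi-scale construction. The paper does not redo it: it imports $\E[T(0,ne_1)]\le Mn^{1-1/\beta}$ from Theorem~1.7 of \cite{moi} and applies Markov's inequality. This is exactly where bounded support enters.

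Conversely, the ``delicate'' union bound over path shapes is unnecessary. A path from $0$ reaching height $>Kn$ must use at least $Kn$ edges with both endpoints in $\llbracket -\lfloor Kn\rfloor,\lfloor Kn\rfloor\rrbracket^2$. All of these lie on the $\approx 4Kn$ lines meeting this box, so the path costs at least $Kn\,\varsigma_n(K)$, where $\varsigma_n(K)$ is the minimum over those lines. Since $(4Kn)^{1/\beta}\varsigma_n(K)$ converges to a Weibull variable, $Kn\,\varsigma_n(K)\ge K^{\frac{\beta-1}{2\beta}}n^{1-1/\beta}$ with asymptotic probability close to one for large $K$. This beats the Markov bound because $\beta>1$.
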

\begin{rem}

    In the second statement of Theorem \ref{th:geo0}, we assume that the support of the passage-time distribution is bounded in order to control $\E[T(0,ne_1)]$. Indeed, we use the fact that, according to Theorem $1.7$ in \cite{moi}, if $\beta>1$, then there exist constants $A_1,A_2>0$ such that $A_1n^{1-\frac{1}{\beta}}\le \E[T(0,ne_1)]\le A_2n^{1-\frac{1}{\beta}}$. However, we believe that the result should still hold under a weaker assumption. Moreover, even though we proved the existence of geodesics for $\beta=1$, determining $H_n$ in this case appears to be more challenging. We leave this question open.
    
\end{rem}
\begin{theo}[Height of geodesics when $a>0$]\label{th:geoa}
Assume that $a>0$, $d\ge2$ and $F(t)\overset{t\to a}{\sim}c(t-a)^{\beta}$ where $\beta>0$ and $c>0$.  Assume also that \eqref{condition} is verified. Then for all $\eta>0$, there exist $\eps,K>0$ such that
$$
\liminf_{n\to+\infty}{\P\left( \eps n^{\frac{\beta}{\beta+d-1}} \le H_n\le Kn^{\frac{\beta}{\beta+d-1}} \right)}\ge 1-\eta.
$$
\end{theo}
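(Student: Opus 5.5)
We write $h:=n^{\frac{\beta}{\beta+d-1}}$. The heuristic behind the exponent is an energy/entropy balance. A geodesic from $0$ to $ne_1$ travels essentially along lines parallel to $e_1$, since the $e_1$-displacement costs at least $an$ by Theorem \ref{th:forme}. To save time it uses a few lines with passage time close to $a$. Among the $e_1$-lines $\{y+\R e_1\}$ with $\lVert y\rVert_1\le h$ there are about $h^{d-1}$ candidates. The best of them has $\tau-a$ of order $\delta$, where $F(a+\delta)h^{d-1}\approx1$, that is $\delta\approx h^{-(d-1)/\beta}$. Riding such a line along the whole length saves about $n\delta$ compared with a typical line. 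Reaching it costs about $2ah$ in transversal steps. Equating $n h^{-(d-1)/\beta}\asymp h$ gives $h=n^{\beta/(\beta+d-1)}$. The proof makes both directions rigorous with this scale $h$.

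\emph{Lower bound.} We compare two strategies.

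First, we show that there is a path of height $\le h$ with passage time at most $an+2ah+n\delta_1$. To build it, choose $\delta_1=Ch^{-(d-1)/\beta}$ with $C$ large. Then, with probability $\ge 1-\eta/2$, some line $y+\R e_1$ with $\varepsilon' h\le\lVert y\rVert_1\le h$ has $\tau\le a+\delta_1$; this follows because the number of such $y$ is of order $h^{d-1}$ and $F(a+\delta_1)\sim c\,\delta_1^{\beta}$. The path goes from $0$ to $y$, then along $e_1$ for $n$ steps, then back to $ne_1$. For the transversal pieces, the $2\lVert y\rVert_1$ steps each cost at least $a$, and we bound them by roughly $2a h(1+o(1))$ by routing on lines whose passage times are not atypically large. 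This works because the lines used transversally can be chosen among many options, and condition \eqref{condition} is used here.

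Second, we show that every path of height $\le \varepsilon h$ costs at least $an+\varepsilon'' n h^{-(d-1)/\beta}$. Any such path must make $n$ steps in direction $e_1$ on lines $y+\R e_1$ with $\lVert y\rVert_1\le\varepsilon h$. There are about $(\varepsilon h)^{d-1}$ such lines, and with high probability all of them satisfy $\tau\ge a+\delta_2$ with $\delta_2=c' (\varepsilon h)^{-(d-1)/\beta}$. Hence the $e_1$-steps alone cost at least $n(a+\delta_2)$.

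Comparing the two strategies, $n\delta_2\gg 2ah+n\delta_1$ for $\varepsilon$ small relative to $C$, because $n h^{-(d-1)/\beta}=h$. So the geodesic must leave the cylinder of radius $\varepsilon h$, and $H_n\ge\varepsilon h$. The constants must be tuned in the right order: first fix $C$ from $\eta$, then take $\varepsilon$ small.

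\emph{Upper bound.} Any path reaching height $\ge Kh$ makes at least $2Kh$ transversal steps, each of cost $\ge a$, and at least $n$ steps in direction $e_1$. The key step is to lower bound the $e_1$-part: the path uses only lines with $\lVert y\rVert_1\le R$, where $R$ is its height. A union bound over dyadic scales $R\in[2^kKh,2^{k+1}Kh]$ shows that with high probability every line with $\lVert y\rVert_1\le R$ (restricted to the relevant segment) satisfies $\tau\ge a+c_0R^{-(d-1)/\beta}(\log)^{-1}$. The logarithmic loss should be absorbed by the extra slack from $K$; if it is not, we use a multi-scale count of lines instead of a crude minimum. Such a path therefore costs at least $an+2aR$. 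The competing path from the lower bound costs at most $an+2ah+n\delta_1=an+O(h)$. For $R\ge Kh$ with $K$ large this is a contradiction, and it also excludes very large $R$. The contradiction needs $2aR>O(h)$, so the $e_1$-savings bound is not even needed here beyond $\tau\ge a$, which makes the upper bound easy.

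\emph{Main obstacle.} The delicate point is the lower bound, specifically controlling the transversal cost of the good path by $(2+o(1))ah$, or at least $O(h)$ with a constant independent of $\varepsilon$, while keeping the dependence between lines manageable. I would handle it by choosing transversal lines whose $\tau$ is at most a fixed constant. A path of length $O(h)$ through such lines exists with high probability, by the shape theorem (Theorem \ref{th:forme}) applied at scale $h$. This gives transversal cost $\le (1+\eta)2ah$. That bound suffices, since $n\delta_2$ can be made an arbitrarily large multiple of $h$ by shrinking $\varepsilon$.
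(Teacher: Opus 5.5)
Your proposal is correct and follows essentially the paper's argument. In both directions you compare the cheapest path in the relevant class (height $\le \varepsilon h$, resp. $>Kh$) with a competitor that rides the best $e_1$-line. You control the transversal legs through Theorem \ref{th:forme} and the line minima through the extreme-value scaling $n h^{-(d-1)/\beta}=h$. The only deviations are cosmetic: you use explicit tail estimates with constants $C,c'$ instead of Weibull limits, and in the upper bound you reuse the lower-bound competitor. That reuse is legitimate because the competitor's own height is irrelevant, and it sidesteps the paper's restricted passage times $\tilde{T}$. The dyadic/logarithmic detour is unnecessary, as you yourself note.
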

The height of geodesics is a central open problem in classical first-passage percolation. It is conjectured that there exists a constant $\xi(d)$, called the wandering exponent, such that for any passage-time distribution, $H_n\approx n^{\xi(d)}$.
The precise and most relevant interpretation of $\approx$ in the classical setting is still a matter of ongoing investigation. Licea, Newman, and Piza \cite{licea} propose four definitions of $\xi(d)$, which they conjecture to be equivalent, and derive lower bounds for these quantities. Newman and Piza also prove in \cite{newman} that $\xi(d)\le\frac{3}{4}$ for all $d\ge2$.
In dimension two, the wandering exponent $\xi(2)$ is expected to equal $\frac{2}{3}$.
For the Brochette model, Theorems \ref{th:geo0} and \ref{th:geoa} show that, under a suitable definition of the wandering exponent and appropriate conditions on $F$, one has $\xi(2)=1$ when $a=0$, and $\xi(d)=\frac{\beta}{\beta+d-1}$ when $a>0$.
\begin{rem}
    If $Y$ is such that $F_Y(t)\overset{t\to a}{\sim}(t-a)^\beta$, then $X:=c^{-\frac{1}{\beta}}(Y-a)+a$ is such that $F_{X}(t)\overset{t\to a}{\sim}c(t-a)^\beta$. Thus, we will consider in all the proofs that $F(t)\overset{t\to a}{\sim}(t-a)^\beta$.
\end{rem}
\begin{rem}
Since multiple geodesics may exist, one could instead define
$$
\tilde{H}_n:=\min_{\Gamma_n}H(\Gamma_n),
$$
and use it instead of $H_n$. However, Theorems $\ref{th:geo0}$ and $\ref{th:geoa}$ would still hold with the same proofs.
\end{rem}
\section{Preliminaries: existence of geodesics}
In this section, we prove the existence of geodesics in several cases. We first define the passage time to infinity:
$$
    \rho:=\rho(F)=\lim_{n\to+\infty}T\left(0,\partial \mathcal{B}(n) \right),
$$
where $\mathcal{B}(n)$ denotes the $\ell_1$-ball of $\Z^d$ of radius $n$ and the boundary of a set $S$ is denoted by $\partial S:=\{ x\in S: \exists y\in\Z^d\setminus S \text{ such that }\lVert x-y \rVert_1=1\}$. Now we recall a result from \cite{Survey} on the link between $\rho$ and the existence of geodesics in the classical model. 
\begin{lem}[Proposition 4.4 in \cite{Survey}]\label{prop:rhoinf}
    Assume that $\rho=+\infty$ for some configuration. Then there exists a geodesic from $x$ to $y$ for all $x,y\in\Z^d$.
\end{lem}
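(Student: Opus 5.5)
The statement is deterministic: fix a configuration $(\tau_e)_{e\in\E^d}$ with $\rho=+\infty$ and fix $x,y\in\Z^d$. The plan is a compactness argument. Only finitely many paths from $x$ to $y$ stay inside a bounded region, so I will show that nearly optimal paths cannot leave some fixed ball. This reduces the infimum to a minimum over a finite set.

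First I will set $t_0:=T(\gamma_0)<+\infty$ for some fixed finite path $\gamma_0$ from $x$ to $y$, for instance a lattice path of length $\lVert x-y\rVert_1$. Then $T(x,y)\le t_0$.

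Second I will transfer $\rho=+\infty$ from the origin to $x$. For $n$ large enough that $x\in\mathcal{B}(n)$, every path from $x$ to $\partial\mathcal{B}(m)$ with $m\ge n$ can be preceded by a fixed finite path $\pi$ from $0$ to $x$. Hence
$$T(x,\partial\mathcal{B}(m))\ge T(0,\partial\mathcal{B}(m))-T(\pi).$$
Since $T(0,\partial\mathcal{B}(m))$ is nondecreasing in $m$ (any path from $0$ to $\partial\mathcal{B}(m+1)$ must cross $\partial\mathcal{B}(m)$) and tends to $\rho=+\infty$, I can choose $m$ with $T(x,\partial\mathcal{B}(m))>t_0+1$ and $y\in\mathcal{B}(m)\setminus\partial\mathcal{B}(m)$. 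Increasing $m$ if necessary secures the second condition, because the lower bound is monotone in $m$.

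Third, any path $\gamma$ from $x$ to $y$ that exits $\mathcal{B}(m)$ must visit $\partial\mathcal{B}(m)$, so $T(\gamma)\ge T(x,\partial\mathcal{B}(m))>t_0+1$. Such a path is therefore irrelevant for the infimum. It then suffices to take the infimum over paths contained in $\mathcal{B}(m)$. Passage times are nonnegative, so erasing loops does not increase $T$, and I may restrict to self-avoiding paths. Self-avoiding paths inside the finite set $\mathcal{B}(m)$ form a finite family. Hence the infimum is attained by some path $\gamma^*$, and $T(\gamma^*)=T(x,y)$, so $\gamma^*$ is a geodesic.

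The only delicate step is the second one. It requires that $T(0,\partial\mathcal{B}(n))$ be monotone, so that $\rho$ is a genuine limit, and that the shift from $0$ to $x$ costs only a finite amount. Both follow since every edge weight is finite: $F$ is a distribution on $[0,+\infty)$. Since the result is quoted from \cite{Survey}, I could also just cite it. The argument nevertheless does not use independence or the Brochette structure, so it applies verbatim in our model.
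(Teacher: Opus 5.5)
Your proof is correct and is essentially the deterministic argument of Proposition 4.4 in \cite{Survey} that the paper invokes: a reference path bounds $T(x,y)$, the estimate $T(x,\partial\mathcal{B}(m))\ge T(0,\partial\mathcal{B}(m))-T(\pi)$ confines nearly optimal paths to a finite ball, and loop-erasure reduces the infimum to a minimum over finitely many self-avoiding paths. As you note, nothing uses independence or the Brochette structure, which is exactly the paper's one-line justification.
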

\begin{proof}
   The proof of Proposition 4.4 in \cite{Survey} does not contain any probabilistic argument. Thus, Lemma \ref{prop:rhoinf} can be proved in the exact same way in the Brochette model.
\end{proof}
\begin{proof}[Proof of Proposition \ref{th:geopos}]
If $(i)$ holds we notice that since $a>0$, any path $\Gamma_n$ of length $n$ is such that $T\left(\Gamma_n\right)\ge an$. Thus, for all $n\in \N$, $\rho>an\to_{n\to+\infty}+\infty$. Then, the result follows from Lemma \ref{prop:rhoinf}.\par
We now detail the proof in the case where $(ii)$ holds, namely when $a=0$ and $F(t)\overset{t\to 0}{\sim}ct^\beta$. Let us show that $\rho=+\infty$. 
Let $\Gamma_n$ be a path starting at $0$ that goes out of $\mathcal{B}(n)$. Then $\Gamma_n$ must cross an edge in every $\mathcal{B}(r)$, for all $0\le r\le n$. Thus, if we denote by $N_i$ the infimum of the passage times of edges in $\mathcal{B}(i)$   we get
    $$
T(\Gamma_n)\ge \sum_{i=1}^{n}N_i.
    $$
    Moreover, for all $i\in\N$ we have
    $$
N_{i+1}=\min(N_i,V_{i+1}),
    $$
    where $V_{i+1}$ is the infimum of the $2d(d-1)$ independent new passage times in $\mathcal{B}(i+1)$. Indeed, due to the specific structure of the Brochette model, the only new passage times are those associated with integer lines passing through the points $\pm ne_i$ for $i\in \llbracket 1,d \rrbracket$, and distinct from $\Z e_i$. There are $2d$ such points and $d-1$ such integer lines through each point. We define the moment when $(N_i)_{i\ge 1}$ decreases by induction
    $$
T_0:=0 \text{ and if }\; T_j \; \text{ is defined }\; T_{j+1}:=\inf\{ i\ge T_j+1: N_{i+1}<N_i \}.
    $$
Let $\mathcal{F}_n$ denote the $\sigma$-algebra generated by $(N_i)_{i\le n}$. Conditionally to $\mathcal{F}_{T_{j-1}}$, $T_j-T_{j-1}$ follows a geometric distribution of parameter $p_j$ such that
$$
    p_j=\P\left(V_j\le N_{T_{j-1}}|N_{T_{j-1}}\right)
       =1-\P\left( \tau> N_{T_{j-1}}|N_{T_{j-1}}\right)^{2d(d-1)}
       =1-\left(1-F\left(N_{T_{j-1}}\right)\right)^{2d(d-1)}.
$$
Thus we obtain
\begin{align*}
    \P\left( \left(T_j-T_{j-1}\right)N_{T_{j-1}}>1|\mathcal{F}_{T_{j-1}}\right)&=\left( 1-p_j\right)^{\lfloor\frac{1}{N_{T_{j-1}}}\rfloor}\\
    &=\left( 1-  F\left(N_{T_{j-1}}\right)\right)^{2d(d-1)\lfloor\frac{1}{N_{T_{j-1}}}\rfloor}\\
    &=\exp\left(2d(d-1)\lfloor\frac{1}{N_{T_{j-1}}}\rfloor\log\left( 1-F\left(N_{T_{j-1}}\right)\right)\right).
\end{align*}
Now, since $N_{T_{j-1}}$ converges almost surely to $0$, $\lim_{j\to+\infty}F\left(N_{T_{j-1}}\right)=0$ and $\frac{1}{N_{T_{j-1}}}\overset{j\to+\infty}{\sim}\lfloor\frac{1}{N_{T_{j-1}}}\rfloor$.
Thus we have
$$
\P\left( \left(T_j-T_{j-1}\right)N_{T_{j-1}}>1|\mathcal{F}_{T_{j-1}}\right)\overset{j\to+\infty}{=}\exp\left( -2d(d-1)F\left(N_{T_{j-1}}\right)\lfloor\frac{1}{N_{T_{j-1}}}\rfloor+o\left(\frac{F\left(N_{T_{j-1}}\right)}{N_{T_{j-1}}}\right)\right).
$$
Moreover, since $F(t)\overset{t\to0}{\sim}t^\beta$, we get
$
F\left(N_{T_{j-1}}\right)\overset{j\to+\infty}{\sim}N_{T_{j-1}}^\beta.
$
It follows that, since $\beta\ge 1$,
$$
\lim_{j\to+\infty}F\left(N_{T_{j-1}}\right)\lfloor\frac{1}{N_{T_{j-1}}}\rfloor= \left\{
    \begin{array}{ll}
        0 & \mbox{if } \beta>1 \\
        1 & \mbox{if }  \beta=1
    \end{array}
\right.,
$$
and as a result
$$
\lim_{j\to +\infty}\P\left( \left(T_j-T_{j-1}\right)N_{T_{j-1}}>1|\mathcal{F}_{T_{j-1}}\right)=\exp\left(-2d(d-1)\mathds{1}_{\beta=1}\right) \qquad a.s..
$$
We obtain that for all $n\in\N$
$$
\rho>\sum_{i=1}^{T_n}N_i\ge\sum_{j=1}^{n-1}N_{T_j}\left( T_j-T_{j-1}\right).
$$
Thus, by letting $n$ go to $+\infty$, this yields
$$
\rho\ge \sum_{j=1}^{+\infty}N_{T_j}\left( T_j-T_{j-1}\right) \ge \mathrm{Card}\left(\{ i\ge1: N_{T_i}\left( T_i-T_{i-1}\right)\ge 1 \}\right).
$$
Finally, we know by the conditional version of the Borel-Cantelli Lemma that this cardinal is infinite. Hence the result using Lemma \ref{prop:rhoinf}.
\end{proof}
\begin{rem}
    When $\beta<1$, we can show that $\rho<+\infty$ almost surely. Therefore, we cannot deduce the existence of geodesics.
\end{rem}
\section{The wandering exponent}
In this section, we prove Theorem \ref{th:geo0} and \ref{th:geoa}. To this end, we employ the same strategy several times. We first consider a path with minimal passage time among those constrained to lie above or below a given height. We then construct a second path that attains the height specified in the theorem and show that it is asymptotically more efficient.
First, we recall a result due to Fisher, Tippett and Gnedenko on the limiting law for the minimum of $i.i.d.$ random variables.
\begin{theo}[Fisher, Tippett and Gnedenko]\label{weibull}
    Let $X_1,\dots,X_n$ be $i.i.d.$ random variables with common distribution function $F$ such that $F(t)\overset{t\to a}{\sim}(t-a)^{\beta}$, where $a$ is the infimum of the support of $X_1$ and $\beta>0$. Then if we denote by $\sigma_n:=\min{\{X_1,\dots,X_n}\}$, we have
    $$
n^{\frac{1}{\beta}}(\sigma_n-a)\overset{(d)}{\to}_{n\to+\infty} Y,
    $$
    where $Y$ follows a Weibull distribution of parameters $\beta$ and $1$.
\end{theo}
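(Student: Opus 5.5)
The plan is to compute the distribution function of the rescaled minimum directly and pass to the limit. By independence, for every $x\ge0$,
$$
\P\left(n^{\frac{1}{\beta}}(\sigma_n-a)>x\right)=\P\left(X_1>a+xn^{-\frac{1}{\beta}}\right)^n=\left(1-F\left(a+xn^{-\frac{1}{\beta}}\right)\right)^n .
$$
For $x<0$ this probability equals $1$, since $F(a-\eps)=0$ for all $\eps>0$ by \eqref{hypo:a}. Hence the limit variable is supported on $[0,+\infty)$, which is the support of a Weibull variable. It remains to show that for every fixed $x>0$ the right-hand side converges to $e^{-x^\beta}$. The function $x\mapsto 1-e^{-x^\beta}$ on $[0,+\infty)$ is exactly the Weibull distribution function with shape $\beta$ and scale $1$.

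Fix $x>0$ and set $t_n:=a+xn^{-1/\beta}$, so that $t_n\to a$ from above. The hypothesis $F(t)\sim(t-a)^\beta$ as $t\to a$ gives
$$
F(t_n)=(1+o(1))\,x^\beta n^{-1}, \qquad \text{hence } F(t_n)\to0 \text{ and } nF(t_n)\to x^\beta .
$$
Because $F(t_n)\to 0$, we may expand $\log(1-u)=-u+O(u^2)$, which yields
$$
n\log\bigl(1-F(t_n)\bigr)=-nF(t_n)+O\bigl(nF(t_n)^2\bigr)=-x^\beta+o(1).
$$
Exponentiating gives $\P(n^{1/\beta}(\sigma_n-a)>x)\to e^{-x^\beta}$. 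At $x=0$ the probability is $(1-F(a))^n$, and $F(a)=0$ because $F(a)\le F(a+\eps)\sim\eps^\beta\to0$. So convergence holds at every real point, and in particular at every continuity point of the limit distribution function. Therefore $n^{1/\beta}(\sigma_n-a)$ converges in distribution to $Y$.

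None of the steps is a real obstacle. The only points needing care are that the equivalence $F(t)\sim(t-a)^\beta$ is used only along the deterministic sequence $t_n\downarrow a$, and that the second-order term $nF(t_n)^2=O(1/n)$ vanishes. The general Fisher–Tippett–Gnedenko classification is not needed, because the hypothesis directly gives the regular variation of $F$ at $a$ with index $\beta$.
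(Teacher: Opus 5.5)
Your proof is correct, and it differs from the paper mainly in that the paper gives no proof of this statement at all. It simply cites the classical Fisher--Tippett--Gnedenko theory: the extremal types theorem together with Gnedenko's domain-of-attraction criterion for minima with a finite left endpoint. That criterion says attraction to the Weibull law holds iff $F(a+tx)/F(a+t)\to x^\beta$ as $t\downarrow 0$, with normalizing constants $F^{-1}(1/n)-a$, where $F^{-1}$ is the generalized inverse.

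You instead compute the survival function $\bigl(1-F(a+xn^{-1/\beta})\bigr)^n$ explicitly. You pass to the limit using only $nF(a+xn^{-1/\beta})\to x^\beta$, which is the standard elementary proof of this special case.

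The citation buys generality: for instance, a slowly varying correction $F(a+t)\sim t^\beta L(t)$ would be allowed, at the price of a less explicit scaling. Your computation is self-contained and identifies the scaling $n^{1/\beta}$ directly.

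It also makes transparent the form in which the result is actually used later in the paper. There, the number $N_n$ of variables (e.g.\ $\mathrm{Card}(\D_{n,\eps}(0))$) differs from the scaling sequence $c_n$. The same lines of your argument show that if $N_n c_n^{-\beta}\to\kappa\in(0,+\infty)$, then $\bigl(1-F(a+x/c_n)\bigr)^{N_n}\to e^{-\kappa x^\beta}$. In other words, the rescaled minimum converges to $\kappa^{-1/\beta}Y$, which is exactly what is needed for $\rho_n(\eps)$, $\sigma_n(\eps)$ and $\sigma_n(K)$.
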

\subsection{The case $a=0$}
This subsection is devoted to the proof of Theorem \eqref{th:geo0}. Thus, we assume that $a=0$, $d=2$, and $F(t)\overset{t\to0}{\sim}t^\beta$ with $\beta\ge1$ for the lower bound and $\beta>1$ for the upper bound. We first prove the lower bound \ref{lowbo0}: when $n$ goes to $+\infty$, $H_n\ge\eps n$ with high probability. Then we prove the upper bound \ref{upbo0}: when $n$ goes to $+\infty$, $H_n\le Kn$ with high probability.
\begin{proof}[Proof of Theorem \eqref{th:geo0}]
    \textbf{Lower bound.} If $x,y\in\Z^d$ belong to the same integer line $\delta$, we denote by $\llbracket x,y \rrbracket$:=$[x,y]\cap \Z^d.$
    Let $0<\eps<1$, and let $\Gamma_n^1$ be a path from $0$ to $ne_1$ with minimal passage time among all paths $\Gamma$ such that $H(\Gamma)\le \eps n$.
    To prove the lower bound we construct a path $\Gamma_n^2$ such that $H(\Gamma_n^2)>\eps n$
    and we show that this path is asymptotically better than $\Gamma_n^1$. Since, $\{T(\Gamma_n^2)-T(\Gamma_n^1)\le 0\}\subset\{H_n>\eps n\}$ it is sufficient to get a lower bound on $\P(T(\Gamma_n^2)-T(\Gamma_n^1)\le 0)$.
    First, we define 
\begin{itemize}
    \item $\rho_n(\eps):=\min\{\tau_{y+\Z e_1}:y\in\llbracket \lfloor -\eps n \rfloor e_2, \lfloor\eps n\rfloor e_2\rrbracket\}$.
    \item  $\sigma_n(\eps):=\min\{\tau_{y+\Z e_1}:y\in\llbracket (\lfloor\eps n\rfloor+1)e_2, \lfloor\sqrt{\eps} n\rfloor e_2\rrbracket\}.$
    \item $\mu_n:=\min\{\tau_{x+\Z e_2}:x\in\llbracket 0, \lfloor \frac{n}{3}\rfloor e_1\rrbracket\} $
    \item $\nu_n:=\min\{\tau_{x+\Z e_2}:x\in\llbracket  \lceil \frac{2n}{3}\rceil e_1,ne_1\rrbracket\}$.
\end{itemize}
Thus, $\rho_n(\varepsilon)$ (resp. $\sigma_n(\varepsilon)$) denotes the minimal horizontal passage time over integer lines with ordinate between $\lfloor -\varepsilon n \rfloor$ and $\lfloor \varepsilon n \rfloor$ (resp. between $\lfloor \varepsilon n \rfloor + 1$ and $\lfloor \sqrt{\varepsilon} n \rfloor$). Moreover, $\mu_n$ (resp. $\nu_n$) denotes the minimal vertical passage time over integer lines with abscissa between $0$ and $\lfloor \frac{n}{3} \rfloor$ (resp. between $\lceil \frac{2n}{3} \rceil$ and $n$). We refer the reader to Figure \ref{Figure hauteur0} for an illustration of the construction of $\Gamma_n^2$, which we now describe.
Let $x_{1,n}\in\llbracket 0, \lfloor \frac{n}{3}\rfloor e_1\rrbracket$  (Resp. $x_{2,n}\in \llbracket  \lceil \frac{2n}{3}\rceil e_1,ne_1\rrbracket$) be a vertex of $\Z e_1$ such that $\tau_{x_{1,n}+\Z e_2}=\mu_n$ (Resp. $\tau_{x_{2,n}+\Z e_2}=\nu_n$). Let $y_n$ be the vertex of $\Z e_2$ such that $\tau_{y_n+\Z e_1}=\sigma_n(\eps)$. 
We define $\gamma_{1,n}$ as the path that follows $\Gamma_n^1$ until it crosses $x_{1,n}+\Z e_2$ for the first time. Then $\gamma_{2,n}$ is the path that goes straight from the end point of $\gamma_{1,n}$ to $x_{1,n}+y_n$ then straight to $x_{2,n}+y_n$ and straight again to the last intersection vertex between $\Gamma_n^1$ and $x_{2,n}+\Z e_2$. Finally, $\gamma_{3,n}$ is the path that follows $\Gamma_n^1$ from the end of $\gamma_{2,n}$ to $ne_1$ and $\Gamma_n^2$ is the concatenation of $\gamma_{1,n}$, $\gamma_{2,n}$ and $\gamma_{3,n}$. 
\begin{figure}
\centerline{\includegraphics[scale=0.4]{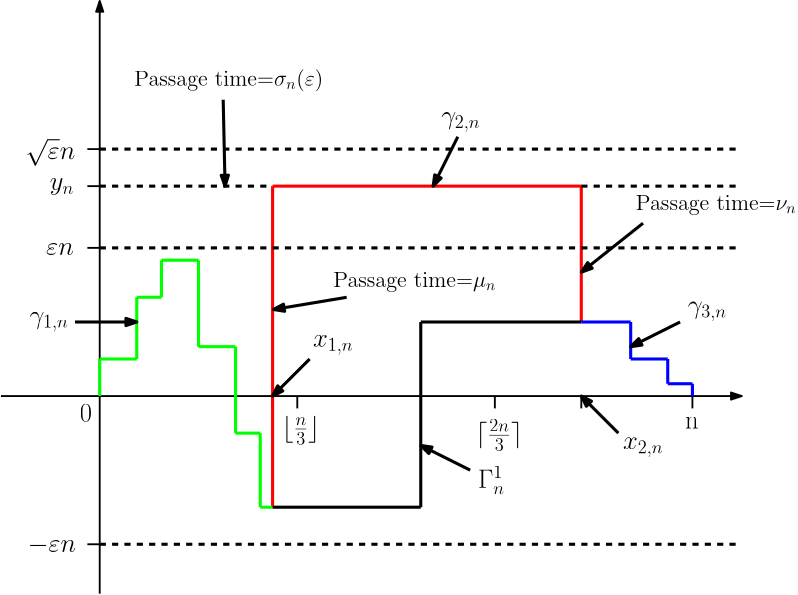}}~
\caption{The initial path $\Gamma_n^1$ is the concatenation of $\gamma_{1,n}$ (in green), the black path and $\gamma_{3,n}$ (in blue). The path $\Gamma_n^2$ is the concatenation of $\gamma_{1,n}$ (in green), $\gamma_{2,n}$ (in red), and $\gamma_{3,n}$ (in blue).}
\label{Figure hauteur0}
\end{figure}
If $\sigma_n(\eps)<\rho_n(\eps)$, this construction yields
    $$
    T(\Gamma_n^{2})-T(\Gamma_n^1)\le \frac{n}{3}(\sigma_n(\eps)-\rho_n(\eps))
    +(\eps +\sqrt{\eps}) n(\mu_n+\nu_n).
    $$ 
    Thus we have
    \begin{align*}
\P(H_n>\eps n)&\ge\P(T(\Gamma_n^2)-T(\Gamma_n^1)\le 0)\\
                  &\ge \P(T(\Gamma_n^2)-T(\Gamma_n^1)\le 0 \text{ and } \sigma_n(\eps)<\rho_n(\eps))\\
                  &\ge\P\left(\frac{n}{3}(\sigma_n(\eps)-\rho_n(\eps))+(\eps +\sqrt{\eps}) n(\mu_n+\nu_n)\le0\right)\\
                  &=\P\left(n^{\frac{1}{\beta}}(\rho_n(\eps)-\sigma_n(\eps))\ge 3(\eps+\sqrt{\varepsilon})n^{\frac{1}{\beta}}(\mu_n+\nu_n)\right).
    \end{align*}
 Now, using Theorem \ref{weibull}, $n^{\frac{1}{\beta}}\rho_n(\eps)$ and $n^{\frac{1}{\beta}}\sigma_n(\eps)$ converge in distribution to $Y_\varepsilon$ and $Y'_\varepsilon$ respectively such that $Y_\eps$ and $Y'_\eps$ are independent and $\varepsilon^{\frac{1}{\beta}}Y_\varepsilon$ and $(\sqrt{\varepsilon}-\varepsilon)^{\frac{1}{\beta}}Y'_\varepsilon$ follow a Weibull distribution of parameters $\beta$ and 1.
Moreover, since the lines involved in the definitions of $\rho_n(\eps)$ and $\sigma_n(\eps)$ are distinct, the random variables $n^{\frac{1}{\beta}}\rho_n(\eps)$ and $n^{\frac{1}{\beta}}\sigma_n(\eps)$ are independent for all $n\in\N$. Thus $n^{\frac{1}{\beta}}(\rho_n(\eps)-\sigma_n(\eps))$ converges in distribution to $Y_\varepsilon-Y'_\varepsilon.$ We show by the same reasoning that $n^{\frac{1}{\beta}}\left(\mu_n+\nu_n\right)$ converges in distribution to a positive random variable $Z$, such that $Z$ is independent from $Y_\eps-Y'_\eps$. Now, $\P(Y_\varepsilon> Y'_\varepsilon)\to_{\varepsilon\to0}1$. Hence 
$$
\P\left(n^{\frac{1}{\beta}}(\rho_n(\eps)-\sigma_n(\eps))\ge 3(\eps+\sqrt{\varepsilon})n^{\frac{1}{\beta}}(\mu_n+\nu_n)\right)\to_{n\to+\infty}\P\left( Y_\eps-Y'_\eps\ge 3\left(\eps+\sqrt{\eps}\right)Z\right)\to_{\eps\to0}1,
$$
which yields $\lim_{\eps\to0}\liminf_{n\to+\infty}{\P\left( H_n\ge \eps n \right)}=1$.\par
\medskip
\textbf{Upper bound.} 
Let $K>0$ and $\Gamma_n$ a geodesic from $0$ to $ne_1$. On the event $\{H(\Gamma_n)>Kn \}$,
$\Gamma_n$ must go through at least $Kn$ edges in $\llbracket \lfloor- Kn\rfloor;\lfloor Kn \rfloor \rrbracket^2$.  We denote by $\varsigma_n(K)$ the infimum of the passage times of the edges having both of their extremities in $\llbracket \lfloor- Kn\rfloor;\lfloor Kn \rfloor \rrbracket^2$. (See Figure \ref{Figure hauteur01}.)
\begin{figure}
\centerline{\includegraphics[scale=0.33]{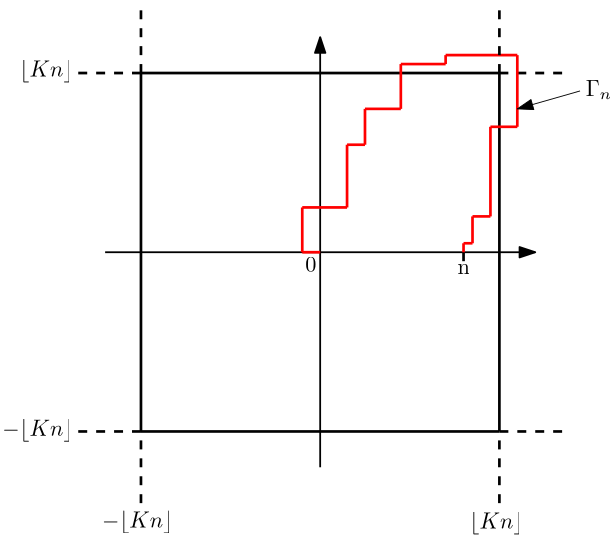}}~
\caption{The construction of $\Gamma_n$ going out of $\llbracket -\lfloor Kn \rfloor, \lfloor Kn \rfloor \rrbracket^2$).}
\label{Figure hauteur01}
\end{figure}

    Therefore $\{H(\Gamma_n) >Kn\}\subset
\{T(\Gamma_n)\ge Kn\varsigma_n(K)\}.$
Thus we have
\begin{align*}
\P(H(\Gamma_n) >Kn)&\le\P(T(\Gamma_n)\ge Kn\varsigma_n(K))\\
       &\le \P(T(\Gamma_n)\ge Kn\varsigma_n(K) \text{ and } \varsigma_n(K)\ge K^{-\frac{\beta+1}{2\beta}}n^{-\frac{1}{\beta}})+\P(\varsigma_n(K)< K^{-\frac{\beta+1}{2\beta}}n^{-\frac{1}{\beta}})\\
       &\le\underbrace{\P(T(\Gamma_n)\ge K^{\frac{\beta-1}{2\beta}}n^{1-\frac{1}{\beta}})}_{=:U_n}+\underbrace{\P((Kn)^{\frac{1}{\beta}}\varsigma_n(K)<K^{-\frac{\beta-1}{2\beta}})}_{=:V_n}.
\end{align*}
Now, since the passage times are bounded and using Theorem $1.7$ in \cite{moi}, there exists $M>0$ such that $\E[T(0,ne_1)]\le Mn^{1-\frac{1}{\beta}}$. We obtain using Markov's inequality
$$
U_n=\P(T(\Gamma_n)\ge K^{\frac{\beta-1}{2\beta}}n^{1-\frac{1}{\beta}})= \P(T(0,ne_1)\ge K^{\frac{\beta-1}{2\beta}}n^{1-\frac{1}{\beta}})\le\frac{M}{K^{\frac{\beta-1}{2\beta}}}\to_{K\to+\infty}0.
$$
Moreover, using Theorem \ref{weibull}, $(4Kn)^{\frac{1}{\beta}}\varsigma_n(K)$ converges in distribution to a Weibull random variable $Y$ of parameter $\beta$ and $1$. Thus
$$
V_n\to_{n\to+\infty}\P(Y<4^{\frac{1}{\beta}}K^{-\frac{\beta-1}{2\beta}})\to_{K\to+\infty}0.
$$
Hence we have
$$
\P(H_n>Kn)\le U_n+V_n\to_{n\to+\infty}\P(Y<4^{\frac{1}{\beta}}K^{-\frac{\beta-1}{2\beta}})\to_{K\to+\infty}0,
$$
which yields $\lim_{K\to +\infty}\liminf_{n\to +\infty}\P(H_n\le Kn)=1$. 
\end{proof}

\subsection{The case $a>0$}
This subsection is devoted to the proof of Theorem \ref{th:geoa}. Thus, we assume that $a>0$ and $F(t)\overset{t\to a}{\sim}(t-a)^\beta$ with $\beta>0$. 
\begin{proof}[Proof of Theorem \eqref{th:geoa}]
   \textbf{Lower bound.} Let $0<\eps<1$, and let $\Gamma_n^1$ be a path from $0$ to $ne_1$ with minimal passage time among all paths $\Gamma$ such that $H(\Gamma)\le \varepsilon n^{\frac{\beta}{\beta+d-1}} $.
    To prove the lower bound we construct a path $\Gamma_n^2$ such that $H(\Gamma_n^2)>\eps n^{\frac{\beta}{\beta+d-1}}$
    and we show that this path is asymptotically better than $\Gamma_n^1$.
(See Figure \ref{Figure hauteur a} for the construction in dimension $d=3$.) Since $\{H_n\le \eps n^{\frac{\beta}{\beta+d-1}}\}\subset\{T(\Gamma_n^2)-T(\Gamma_n^1)\ge0\}$
    it is sufficient to upper bound $\P\left(T(\Gamma_n^2)-T(\Gamma_n^1)\ge0\right)$.
    First, we define for $n\in \N,\eps>0$ and $x\in\Z^d$ 
    \begin{itemize}
        \item $\D_{n,\eps}(x):=\{y\in\Z^d:y_1=x_1 \text{ and }\lVert y-x \rVert_1\le \eps n^{\frac{\beta}{\beta+d-1}}\}$
    \item $\A_{n,\eps}(x):= \D_{n,\sqrt{\eps}}(x)\setminus \D_{n,\eps}(x)$
\item $\rho_n(\eps):=\min_{x\in \D_{n,\eps}(0)}{\tau_{x+\Z e_1}}$
\item $\sigma_n(\eps):= \min_{y\in \A_{n,\eps}(0)}{\tau_{y+\Z e_1}}$.
\end{itemize}
Thus, $\D_{n,\eps}(x)$ is a $(d-1)$-dimensional 
$\ell_1$-ball in $\Z^d$ centered at $x$ with radius $\eps n^{\frac{\beta}{\beta+d-1}}$. Moreover, $\A_{n,\eps}(x)$ is a $(d-1)$-dimensional annulus centered at $x$ with inner radius $\eps n^{\frac{\beta}{\beta+d-1}}$ and outer radius $\sqrt{\eps}n^{\frac{\beta}{\beta+d-1}}$. Finally, $\rho_n(\eps)$(resp. $\sigma_n(\eps)$) denotes the minimal passage time over integer lines parallel to $\Z e_1$ that intersect $\D_{n,\eps}(0)$ (resp. $\A_{n,\eps}(0)$ ).
    We construct $\Gamma_n^2$ as follows.  We denote by $m_n$ the vertex of
    $ \A_{n,\eps}(0)$ such that 
    $$
    \tau_{m_n+\Z e_1}=\sigma_n(\eps).
    $$
    Let $\gamma_{1,n}$ be a geodesic from $0$ to $m_n$. We define $\gamma_{2,n}$ as a straight path from $m_n$ to $m_n+ne_1$ and $\gamma_{3,n}$ as a geodesic from $m_n+ne_1$ to $ne_1$. We define $\Gamma_n^2$ as the concatenation of $\gamma_{1,n}$, $\gamma_{2,n}$ and $\gamma_{3,n}$. 
    \begin{figure}
\centerline{\includegraphics[scale=0.4]{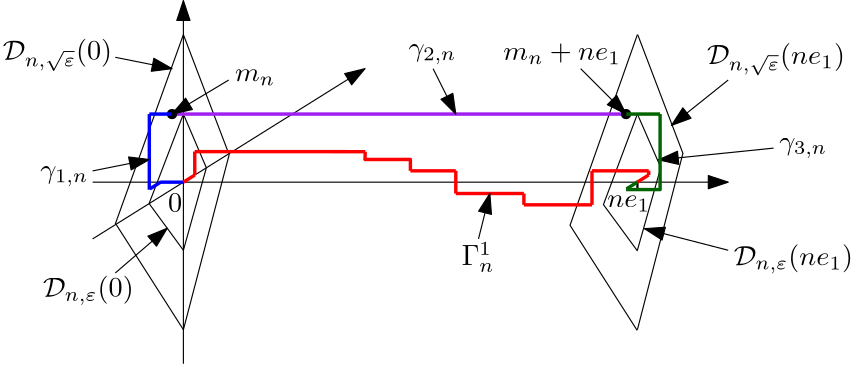}}~
\caption{The path $\Gamma_n^2$ is the concatenation of $\gamma_{1,n}$ (in blue), $\gamma_{2,n}$ (in purple) and $\gamma_{3,n}$ (in green).}
\label{Figure hauteur a}
\end{figure}
    Thus we have
    \begin{align*}
T(\Gamma_n^2)-T(\Gamma_n^1)&\le T\left( \gamma_{1,n}\right)+T\left( \gamma_{2,n}\right)+T\left( \gamma_{3,n}\right)-T\left(\Gamma_n^1\right)\\
&\le\sup_{x\in \A_{n,\eps}(0)}T(0,x)+ \sup_{x\in \A_{n,\eps}(ne_1)}T(x,ne_1)+n\left(\sigma_n(\eps)-\rho_n(\eps)\right),
\end{align*}
    which yields using the union bound
\begin{align*}
    \P\left(T(\Gamma_n^2)-T(\Gamma_n^1)\ge 0\right)&\le 2\P\left(\sup_{x\in \A_{n,\eps}(0)}T(0,x)\ge \frac{1}{2}n\left(\rho_n(\eps)-\sigma_n(\eps)\right)\right)\\
    &=2\P\left(n^{-\frac{\beta}{\beta+d-1}}\sup_{x\in \A_{n,\eps}(0)}T(0,x)\ge \frac{1}{2}n^{\frac{d-1}{\beta+d-1}}\left(\rho_n(\eps)-\sigma_n(\eps)\right)\right)\\
    &\le 2\underbrace{\P\left(n^{-\frac{\beta}{\beta+d-1}}\sup_{x\in \A_{n,\eps}(0)}T(0,x)\ge 2a\right)}_{=:I_n}\\
    &\qquad +2\underbrace{\P\left(n^{\frac{d-1}{\beta+d-1}}\left(\rho_n(\eps)-\sigma_n(\eps)\right)<4a\right)}_{=:J_n}.
\end{align*}
Now we have
$$
I_n\le \P\left(\D_{n,\sqrt{\eps}}(0)\nsubseteq B_{2n^{\frac{\beta}{\beta+d-1}}a}\right),
$$
where we recall that for $t\in\R_+$, $B_t$ is the random ball of center $0$ and radius $t$ for $T$.
Moreover, according to Theorem \ref{th:forme} applied for $\eta=\frac{1}{2}$ we have, since $\eps<1$,
\begin{align*}
\P\left(\D_{n,\sqrt{\eps}}(0)\subset B_{2n^{\frac{\beta}{\beta+d-1}}a}\text{ for all }n\text{ large enough}\right)&=\P\left(\frac{\sqrt{\eps}}{2a}\D_{1,1}(0)\subset \frac{B_{2n^{\frac{\beta}{\beta+d-1}}a}}{2n^{\frac{\beta}{\beta+d-1}}a} \;\text{ for all }n \text{ large enough}\right)\\
             &=1.
\end{align*}  
Thus $\P\left(\D_{n,\sqrt{\eps}}(0)\nsubseteq B_{2n^{\frac{\beta}{\beta+d-1}}a}\right)\to_{n\to+\infty}0$ which yields
$$
\lim_{n\to+\infty}I_n=0.
$$
Moreover, if $n\in\N$, $\mathrm{Card}(\mathcal{B}(n))\overset{n\to+\infty}{\sim}\frac{2^d}{d!}n^d$.
Therefore, for all $d\ge 2$, $\eps>0$ and $n\in \N$, if we denote $C_d:=\frac{2^d}{d!}$ we have
$$
\mathrm{Card}(\mathcal{D}_{n,\eps}(0))\overset{n\to+\infty}{\sim}C_{d-1}\eps^{d-1}n^{\frac{\beta(d-1)}{\beta+d-1}},
$$
and 
$$
\mathrm{Card}\left(\A_{n,\eps}(0)\right)\overset{n\to+\infty}{\sim}C_{d-1}(\eps^{\frac{d-1}{2}}-\eps^{d-1})n^{\frac{\beta(d-1)}{\beta+d-1}}.
$$
 Thus, according to Theorem \ref{weibull}, there exist two independent random variables $Y$ and $Y'$ following a Weibull distribution of parameters $\beta$ and $1$ such that
$$
n^{\frac{d-1}{\beta+d-1}}\left(\rho_n(\eps)-a\right)\overset{(d)}{\to}_{n\to+\infty}C_{d-1}^{-\frac{1}{\beta}}\eps^{-\frac{d-1}{\beta}}Y=:Y_\eps,
$$
and 
$$
n^{\frac{d-1}{\beta+d-1}}\left(\sigma_n(\eps)-a\right)\overset{(d)}{\to}_{n\to+\infty}C_{d-1}^{-\frac{1}{\beta}}(\eps^{\frac{d-1}{2}}-\eps^{d-1})^{-\frac{1}{\beta}}Y'=:Y'_\eps.
$$
Then, because $\rho_n(\eps)$ and $\sigma_n(\eps)$ are independent for all $n$,
$$
n^{\frac{d-1}{\beta+d-1}}\left(\rho_n(\eps)-\sigma_n(\eps)\right)\overset{(d)}{\to}_{n\to+\infty}Y_\eps-Y'_\eps.
$$
Since we have
$$
\lim_{\eps\to0}\frac{\eps^{\frac{d-1}{\beta}}}{(\eps^{\frac{d-1}{2}}-\eps^{d-1})^{\frac{1}{\beta}}}=0,
$$
then
$$
\lim_{\eps\to0}\P\left( Y_\eps-Y'_\eps<4a\right)=0.
$$
Thus
$$
\lim_{n\to+\infty}J_n=\P\left( Y_\eps-Y'_\eps<4a\right)\to_{\eps\to0}0.
$$
Hence we have
$$
\P(H_n\le \eps n^{\frac{\beta}{\beta+d-1}})\le\P\left(T(\Gamma_n^2)-T(\Gamma_n^1)\ge 0\right)\le 2(I_n+J_n)\to_{n\to+\infty}2\P\left( Y_\eps-Y'_\eps<4a\right)\to_{\eps\to0}0,
$$
which yields $\lim_{\eps\to 0}\liminf_{n\to+\infty}\P(H_n\ge \eps n^{\frac{\beta}{\beta+d-1}})=1$.\par
\medskip
\textbf{Upper bound.} Let $\Gamma_n^1$ be a path from $0$ to $ne_1$ with minimal passage time among all paths $\Gamma$ such that $H(\Gamma)>Kn^{\frac{\beta}{\beta+d-1}}$.
To prove the upper bound, we construct a path $\Gamma_n^2$ such that $H(\Gamma_n^2)\le Kn^{\frac{\beta}{\beta+d-1}}$  and show that the event $\{T(\Gamma_n^2)-T(\Gamma_n^1)\ge0\}$
has asymptotically a small probability to occur. Since $\{H_n>Kn^{\frac{\beta}{\beta+d-1}}\}\subset\{T(\Gamma_n^2)-T(\Gamma_n^1)\ge0\}$ it is sufficient to upper bound $\P(T(\Gamma_n^2)-T(\Gamma_n^1)\ge0)$.
First recall the notation $\mathcal{D}_{n,\eps}(x)$ introduced in the proof of the lower bound and define 
$$
\sigma_n(K):=\min\{\tau_{x+\Z e_1}:x\in \D_{n,\frac{K}{2}}(0)\}.
$$
 We construct $\Gamma_n^2$ as follows. Let $m_n\in \D_{n,\frac{K}{2}}(0)$ such that $\tau_{m_n+\Z e_1}=\sigma_n(K)$. Let $\gamma_{1,n}$ be a path from $0$ to $m_n$ such that $H(\gamma_{1,n})\le Kn^{\frac{\beta}{\beta+d-1}}$
with the smallest passage time. We define $\gamma_{2,n}$ as a straight path from $m_n$ to $m_n+ne_1$ and $\gamma_{3,n}$ as a path from $m_n+ne_1$ to $ne_1$ such that $H(\gamma_{3,n})\le Kn^{\frac{\beta}{\beta+d-1}}$
with the smallest passage time. We define $\Gamma_n^2$ as the concatenation of $\gamma_{1,n}$, $\gamma_{2,n}$ and $\gamma_{3,n}$. (See Figure \ref{Figure hauteur losange} for the construction in dimension $d=3$.)
\begin{figure}
\centerline{\includegraphics[scale=0.4]{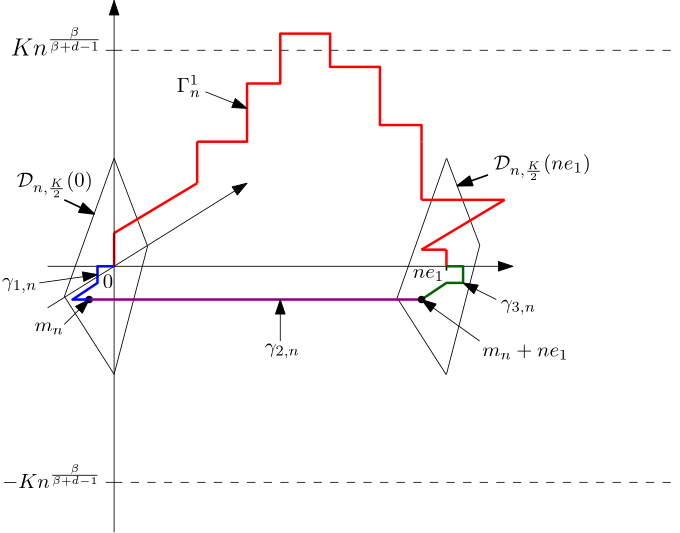}}~
\caption{The path $\Gamma_n^2$ is the concatenation of $\gamma_{1,n}$ (in blue), $\gamma_{2,n}$ (in purple) and $\gamma_{3,n}$ (in green).}
\label{Figure hauteur losange}
\end{figure}For all $x\in \Z^d$ and $y\in\D_{n,\frac{K}{2}}(x)$ we denote by
$$
\tilde{T}(x,y):=\min\{T(\Gamma), \Gamma \text{ path from }x \text{ to } y \text{ such that }H(\Gamma)\le Kn^{\frac{\beta}{\beta+d-1}}\}
$$
and
$$
\check{T}(x,y):=\min\{T(\Gamma), \Gamma \text{ path from } x \text{ to } y \text{ such that }H(\Gamma)> Kn^{\frac{\beta}{\beta+d-1}}\}
$$
Then we have
\begin{align*}
T\left(\Gamma_n^2\right)-T\left(\Gamma_n^1\right)&\le T\left(\gamma_{1,n}\right)+T\left(\gamma_{2,n}\right)+T\left(\gamma_{3,n}\right)-T\left( \Gamma_n^1\right)\\
&\le \sup_{x\in \D_{n,\frac{K}{2}}(0)} \tilde{T}\left(0,x\right) + \sup_{x\in \D_{n,\frac{K}{2}}(ne_1)}\tilde{T}\left(ne_1,x\right)+ n(\sigma_n(K)-a)-2Kn^{\frac{\beta}{\beta+d-1}}a.
\end{align*}
Thus we obtain
\begin{align*}
    \P\left(T\left(\Gamma_n^2\right)-T\left(\Gamma_n^1\right)\ge 0\right)&\le \P\left(\sup_{x\in \D_{n,\frac{K}{2}}(0)} \tilde{T}\left(0,x\right) + \sup_{x\in \D_{n,\frac{K}{2}}(ne_1)}\tilde{T}\left(ne_1,x\right)+ n(\sigma_n(K)-a)\ge 2Kn^{\frac{\beta}{\beta+d-1}}a\right)\\
    &\le 2\P\left(\underbrace{\sup_{x\in \D_{n,\frac{K}{2}}(0)} \tilde{T}\left(0,x\right)\ge \frac{2}{3}Kn^{\frac{\beta}{\beta+d-1}}a}_{=:E_n}\right)+\P\left(n\left(\sigma_n(K)-a\right)\ge\frac{2}{3}Kn^{\frac{\beta}{\beta+d-1}}a\right).
\end{align*}
Moreover, we have
\begin{align*}
    T\left(0,x\right)&=\min\left(\tilde{T}\left(0,x\right),\check{T}\left(0,x\right)\right)\\
    &\ge \min\left(\tilde{T}\left(0,x\right),Kn^{\frac{\beta}{\beta+d-1}}a\right).
\end{align*}
Thus, if $E_n$ occurs we have
$$
 \sup_{x\in \D_{n,\frac{K}{2}}(0)} T\left(0,x\right)\ge \min\left(\frac{2}{3}Kn^{\frac{\beta}{\beta+d-1}}a, Kn^{\frac{\beta}{\beta+d-1}}a\right)= \frac{2}{3}Kn^{\frac{\beta}{\beta+d-1}}a.
$$
Therefore
$$
\P\left(T\left(\Gamma_n^2\right)-T\left(\Gamma_n^1\right)\ge 0\right)\le 2\underbrace{\P\left(\sup_{x\in \D_{n,\frac{K}{2}}(0)} T\left(0,x\right)\ge \frac{2}{3}Kn^{\frac{\beta}{\beta+d-1}}a\right)}_{=:R_n}+\underbrace{\P\left(n^{-\frac{d-1}{\beta+d-1}}(\sigma_n(K)-a)\ge\frac{2}{3}Ka\right)}_{=:S_n}.
$$
 We have
$$
R_n\le \P\left(\D_{n,\frac{K}{2}}(0)\nsubseteq B_{\frac{2}{3}Kn^{\frac{\beta}{\beta+d-1}}a}\right).
$$
Now, according to Theorem \ref{th:forme} applied for $\eta=\frac{1}{4}$ we have
\begin{align*}
\P\left(\D_{n,\frac{K}{2}}(0)\subset B_{\frac{2}{3}Kn^{\frac{\beta}{\beta+d-1}}a}\text{ for all }n\text{ large enough}\right)&=\P\left(\frac{3}{4a}\D_{1,1}(0)\subset \frac{B_{\frac{2}{3}Kn^{\frac{\beta}{\beta+d-1}}a}}{\frac{2}{3}Kn^{\frac{\beta}{\beta+d-1}}a} \;\text{ for all }n \text{ large enough}\right)\\
             &=1.
\end{align*}
This yields that $\lim_{n\to+\infty}\P\left(\D_{n,\frac{K}{2}}(0)\nsubseteq B_{\frac{2}{3}Kn^{\frac{\beta}{\beta+d-1}}a}\right)=0$. Hence 
\begin{equation}\label{eq:rn}
\lim_{n\to+\infty}R_n=0.
\end{equation}
Moreover, for all $d\ge 2$, if we denote $C_d:=\frac{2^d}{d!}$ we have
$$
\mathrm{Card}\left(\D_{n,\frac{K}{2}}(0)\right)\overset{n\to+\infty}{\sim}C_{d-1} \left(\frac{K}{2}\right)^{d-1}n^{\frac{\beta(d-1)}{\beta+d-1}}.
$$
Thus, according to Theorem \ref{weibull} there exists a random variable $Y$ following a Weibull distribution of parameters $\beta$ and $1$ such that
$$
n^{\frac{d-1}{\beta+d-1}}(\sigma_n(K)-a)\overset{(d)}{\to}_{n\to+\infty}C_{d-1}^{-\frac{1}{\beta}} \left(\frac{K}{2}\right)^{-\frac{d-1}{\beta}}Y=:Y_K\overset{(d)}{\le} Y_1,
$$
where the inequality is true for $K>1$, and 
$$
\lim_{K\to+\infty}\P\left( Y_1\ge \frac{2}{3}Ka\right)=0,
$$
since $Y_1<+\infty$ almost surely.
Thus for $K>1$
\begin{equation}\label{eq:sn}
\lim_{n\to+\infty}S_n=\P\left(Y_K\ge \frac{2}{3}Ka \right)\le\P\left(Y_1 \ge \frac{2}{3}Ka\right)\to_{K\to+\infty}0.
\end{equation}
Therefore we have
$$
\limsup_{n\to+\infty}\P(H_n>Kn^{\frac{\beta}{\beta+d-1}})\le \limsup_{n\to+\infty}\P\left(T\left(\Gamma_n^2\right)-T\left(\Gamma_n^1\right)\ge 0\right)\le \limsup_{n\to+\infty}(2R_n+S_n),
$$
which yields thanks to \eqref{eq:rn} and \eqref{eq:sn}
$$
\limsup_{n\to+\infty}\P(H_n>Kn^{\frac{\beta}{\beta+d-1}})\le\P\left(Y_K\ge \frac{2}{3}Ka \right)\to_{K\to+\infty}0.
$$
Hence
$$
\lim_{K\to+\infty}\liminf_{n\to+\infty}\P(H_n\le Kn^{\frac{\beta}{\beta+d-1}})=1.
$$.
\end{proof}

\section*{Acknowledgments}
I would like to warmly thank my two PhD advisors, Anne-Laure Basdevant and Lucas Gerin, as well as the ANR LOUCCOUM project which supported this work.

\bibliographystyle{alpha}
\bibliography{biblio}
\end{document}